\newlength{\defbaselineskip}
\newcommand{\setlinespacing}[1]%
           {\setlength{\baselineskip}{#1 \defbaselineskip}}
\newcommand{\A}{{\cal A}}
\newcommand{\D}{{\cal D}}
\newcommand{\real}{\mathbb R}
\newcommand{\spn}{\operatorname{span}}
\newcommand{\csp}{\overline{\operatorname{span}}}
\theoremstyle{plain}
\newtheorem{thm}{Theorem}[section]
\newtheorem{cor}[thm]{Corollary}
\newtheorem{lem}{Lemma}[section]
\newtheorem{prop}[thm]{Proposition}
\theoremstyle{definition}
\newtheorem{defn}{Definition}[section]
\theoremstyle{remark}
\numberwithin{equation}{section}
\begin{document}
\title{{Super Greedy Type Algorithms}\thanks{\it Math Subject Classifications.
primary: 41A65; secondary: 41A25, 41A46, 46B20.}}
\author{Entao Liu \thanks{Dept. of Mathematics, 1523 Greene Street, University of South Carolina, Columbia, SC 29208, USA e-mail: liue@mailbox.sc.edu}
and Vladimir N. Temlyakov \thanks{Dept. of Mathematics, 1523 Greene
Street,  University of South Carolina, Columbia, SC 29208, USA
e-mail: temlyak@math.sc.edu}}
\date{October 3,2010} \maketitle
\begin{abstract}
{We study greedy-type algorithms such that at a greedy step we pick several dictionary elements contrary to a single dictionary element in standard greedy-type algorithms. We call such greedy algorithms {\it super greedy algorithms}. The idea of picking several elements at a greedy step of the algorithm is not new. Recently, we observed the following new phenomenon. For incoherent dictionaries these new type of algorithms (super greedy algorithms) provide the same (in the sense of order) upper bound for the error as their  analogues from the standard greedy algorithms. The super greedy algorithms are computationally simpler than their analogues from the standard greedy algorithms. We continue to study this phenomenon.
}
\end{abstract}
{\bf Keywords:} super greedy algorithms, thresholding, convergence rate, incoherent dictionary
\section{Introduction. Weak Super Greedy Algorithm}

This paper is a follow up to the paper \cite{LT}. We continue to study greedy-type algorithms such that at a greedy step we pick several dictionary elements contrary to a single dictionary element in standard greedy-type algorithms. We call such greedy algorithms {\it super greedy algorithms}. We refer the reader to \cite{T2} for a survey of the theory of greedy approximation.
The idea of picking several elements at a greedy step of the algorithm is not new. It was used, for instance, in \cite{T4}. A new phenomenon that we observed in \cite{LT} is the following. For incoherent dictionaries these new type of algorithms (super greedy algorithms) provide the same (in the sense of order) upper bound for the error as their  analogues from the standard greedy algorithms. The super greedy algorithms are computationally simpler than their analogues from the standard greedy algorithms. We continue to study this phenomenon here. We note that the idea of applying super greedy algorithm to incoherent dictionaries was used in \cite{KMPT} for building an efficient learning algorithm.

We  recall some notations and definitions from the theory of
greedy algorithms. Let $H$ be a real Hilbert space with an inner
product $\langle\cdot,\cdot\rangle $ and the norm $\|x\|:=\langle
x,x \rangle^{1/2}$ for all $x\in H$. We say a set $\D$ of
functions (elements) from $H$ is a dictionary if each $g\in \D$
has a unit norm $(\|g\|=1)$ and $\csp \D =H.$
 Let
$$M(\D):=\sup_{\varphi \not= \psi \atop
\varphi,\psi\in \D}| \langle \varphi, \psi \rangle|
$$
 be the
coherence parameter of dictionary $\D$. We say that a dictionary $\D$ is $M$-coherent if $M(\D)\le M$. Main results of this paper concern performance of super greedy algorithms with regard to $M$-coherent dictionaries. We study two versions of super greedy algorithms: the Weak Super Greedy Algorithm   and the Weak Orthogonal Super Greedy Algorithm with
Thresholding. We now proceed to the definitions of these algorithms and to the formulations of main results.

 Let a natural number $s$ and a weakness sequence
$\tau:=\{t_k\}_{k=1}^\infty$, $t_k\in [0,1]$, be given. Consider
the following Weak Super Greedy Algorithm with
parameter $s$.

{\bf WSGA($s,\tau$).} Initialization: $f_0:=f^{s,\tau}_0:=f$. Then for
each $m\ge 1$ we inductively define:
\begin{itemize}
\item[(1)] $\varphi_{(m-1)s+1},\dots,\varphi_{ms}\in \D$ are elements of
the dictionary $\D$ satisfying the following inequality. Denote
$I_m:=[(m-1)s+1,ms]$ and assume that
$$
\min_{i\in I_m}|\langle f_{m-1},\varphi_i\rangle |\ge
t_m\sup_{g\in \D, g\neq \varphi_i, i\in I_m} |\langle
f_{m-1},g\rangle |.
$$

\item[(2)] Let $F_m:=F_m(f_{m-1}):=\spn(\varphi_i, i\in I_m)$ and let
$P_{F_m}$ denote an operator of orthogonal projection onto $F_m$.
Define the residual after $m$th iteration of the algorithm
$$
f_m:= f^{s,\tau}_m:= f_{m-1}-P_{F_m}(f_{m-1}).
$$

\item[(3)] Find the approximant
$$
G^s_m(f):=G^{s,\tau}_m(f,\D):= \sum_{j=1}^m P_{F_j}(f_{j-1}).
$$
\end{itemize}
In the case $t_k=t$, $k=1,2,\ldots$, we write $t$ instead of
$\tau$ in the notations. If $t=1$, we call the WSGA$(s,1)$ the
Super Greedy Algorithm with parameter $s$ (SGA$(s)$).
For $s=1$ the Super Greedy Algorithm coincides with the Pure Greedy Algorithm and the Weak Super Greedy Algorithm coincides with the Weak Greedy Algorithm (see \cite{T2}).

For a general dictionary $\D$ we define the class of functions (elements)
\begin{equation*}
\A_1^0(\D,B):=\left\{f\in H: f=\sum_{k\in \Lambda} c_k g_k, \quad
g_k\in \D, |\Lambda|< \infty, \quad \sum_{k\in \Lambda}|c_k|\leq
B\right\}
\end{equation*}
and we define $\A_1(\D,B)$ to be the closure (in $H$) of
$\A^0_1(\D,B)$. For the case $B=1$, we denote
$A_1(\D):=\A_1(\D,1)$. We   define the  norm
$
|f|_{\A_1(\D)}
$
to be the smallest $B$ such that $f\in \A_1(\D,B)$.

The following open problem (see \cite{T5}, p.65, Open Problem 3.1) on the rate of convergence of the PGA for the $A_1(\D)$  is a central theoretical problem in greedy approximation in Hilbert spaces.

{\bf Open problem.} Find the order of decay of the sequence
$$
\gamma(m):=\sup_{f,\D,\{G^1_m\}} (\|f-G^1_m(f,\D)\| |f|^{-1}_{\A_1(\D)}),
$$
where the supremum is taken over all dictionaries $\D$, all elements
$f\in \A_1(\D)\setminus\{0\}$ and all possible choices of $\{G^1_m\}$.

We refer the reader to \cite{T2} for a discussion of this open problem.
 Introduce the following generalization of the quantity $\gamma(m)$ to the case of the Weak Greedy Algorithms
$$
\gamma(m,\tau):=\sup_{f,\D,\{G^{1,\tau}_m\}} (\|f-G^{1,\tau}_m(f,\D)\| |f|^{-1}_{\A_1(\D)}).
$$

We prove here the following theorem.

\begin{thm}\label{wsga}
Let $\D$ be a dictionary with coherence parameter $M:=M(\D)$. Then,
for $s\le (2M)^{-1}$, the WSGA($s,t$) provides, after $m$ iterations,
an approximation of $f\in A_1(\D)$ with the following upper bound
on the error:
\begin{equation*}
\|f-G^{s,t}_m(f,\D)\|\leq Ct^{-1} s^{-\frac{1}{2}}\gamma(m-1,rt),
\end{equation*}
where $r:=\Big(\frac{1-Ms}{1+Ms}\Big)^{\frac{1}{2}}$ and $C$ is an
absolute constant.
\end{thm}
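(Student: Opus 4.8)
The plan is to reduce a single WSGA super-step to one weak-greedy step onto a well-chosen unit vector, and then to invoke the definition of $\gamma(\cdot,\cdot)$. First I would record the basic coherence estimate: for the $s$ chosen elements $\set{\varphi_i}_{i\in I_m}$ the Gram matrix $\Gamma_m=(\langle\varphi_i,\varphi_j\rangle)_{i,j\in I_m}$ has unit diagonal and off-diagonal entries bounded by $M$, so by Gershgorin its spectrum lies in $[1-(s-1)M,\,1+(s-1)M]\subseteq[1-Ms,\,1+Ms]$; the hypothesis $s\le(2M)^{-1}$ keeps this interval inside $[1/2,3/2]$ and in particular makes $\Gamma_m$ invertible. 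Writing $a=(\langle f_{m-1},\varphi_i\rangle)_{i\in I_m}$, the projection satisfies $\norm{P_{F_m}(f_{m-1})}^2=a^{\mathsf T}\Gamma_m^{-1}a$, whence the two-sided bound $\tfrac{\norm a^2}{1+Ms}\le\norm{P_{F_m}(f_{m-1})}^2\le\tfrac{\norm a^2}{1-Ms}$.

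Next I would extract the $\sqrt{s}$ energy gain. The selection rule gives $\abs{\langle f_{m-1},\varphi_i\rangle}\ge t\,S_{m-1}$ for every $i\in I_m$, where $S_{m-1}:=\sup_{g\in\D}\abs{\langle f_{m-1},g\rangle}$, so $\norm a^2\ge s\,t^2 S_{m-1}^2$ and therefore
\[
\norm{P_{F_m}(f_{m-1})}\ \ge\ t\Big(\tfrac{s}{1+Ms}\Big)^{1/2}S_{m-1}.
\]
Setting $\psi_m:=P_{F_m}(f_{m-1})/\norm{P_{F_m}(f_{m-1})}$, the update in step (2) is exactly $f_m=f_{m-1}-\langle f_{m-1},\psi_m\rangle\psi_m$ with $\langle f_{m-1},\psi_m\rangle=\norm{P_{F_m}(f_{m-1})}$, since $P_{F_m}(f_{m-1})$ already lies in the one-dimensional span of $\psi_m$. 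Thus $\set{f_m}$ is produced by a pure-greedy-type recursion whose selected unit element $\psi_m$ obeys, relative to the original dictionary $\D$, the inequality $\langle f_{m-1},\psi_m\rangle\ge rt\,(s/(1-Ms))^{1/2}\,S_{m-1}$, where I have rewritten $t(s/(1+Ms))^{1/2}=rt\,(s/(1-Ms))^{1/2}$ using $r=((1-Ms)/(1+Ms))^{1/2}$.

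Finally I would compare this enhanced recursion with the one that defines $\gamma(\cdot,rt)$. The point is that $\set{f_m}$ satisfies the same energy identity $\norm{f_m}^2=\norm{f_{m-1}}^2-\langle f_{m-1},\psi_m\rangle^2$ and the same $\A_1$-driven lower bound on $S_{m-1}$ as a Weak Greedy Algorithm of weakness $rt$ run on $f\in\A_1(\D)$, except that each step captures a factor $(s/(1-Ms))^{1/2}\gtrsim s^{1/2}$ more inner product. Transferring this per-step gain into the global bound is the main obstacle: because the residuals are not orthogonal to the earlier $F_j$, one cannot close the recursion in a clean telescoping form (indeed the exact decay of $\gamma$ is the open problem quoted above), so the gain must be routed through the comparison that underlies $\gamma$ rather than summed directly. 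Carrying this out converts the enhanced weakness $rt\,(s/(1-Ms))^{1/2}$ into a prefactor $(s/(1-Ms))^{-1/2}\asymp s^{-1/2}$ multiplying $\gamma(m-1,rt)$, the shift from $m$ to $m-1$ absorbing the first step and the remaining constants (including the $t^{-1}$) being collected into the absolute constant $C$. I expect the first two steps to be routine, and the honest accounting in this last comparison — matching the non-orthogonal $\A_1$ estimate used for the WSGA residuals with the one that governs $\gamma(m-1,rt)$ — to be where essentially all of the work lies.
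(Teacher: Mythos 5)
Your outline gets the preliminary facts right (the Gershgorin/Gram bound and the observation that each super-step is a rank-one greedy step on the unit vector $\psi_m=P_{F_m}(f_{m-1})/\norm{P_{F_m}(f_{m-1})}$, which is a normalized $s$-term combination), but the core of the argument is both missing and, as proposed, unworkable. First, the per-step ``$\sqrt s$ energy gain'' is false as stated: the WSGA selection rule bounds $\min_{i\in I_m}\abs{\langle f_{m-1},\varphi_i\rangle}$ by $t$ times the supremum over $g$ \emph{distinct from the chosen elements}, not by $t\,S_{m-1}$. If the global maximizer of $\abs{\langle f_{m-1},g\rangle}$ is itself one of the chosen $\varphi_i$ (say $f_{m-1}$ is nearly orthogonal to all of $\D$ except one element), the other chosen elements may have tiny inner products, your inequality $\norm{a}^2\ge s\,t^2S_{m-1}^2$ fails, and no $\sqrt{s}$ gain occurs at that step. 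Second, and decisively: even where the gain holds, your plan routes it through a ``weakness parameter'' $rt\,(s/(1-Ms))^{1/2}>1$ measured relative to $\D$, with selected elements $\psi_m\notin\D$. The quantity $\gamma(m,\tau)$ is by definition a supremum over realizations of the WGA whose weakness parameters lie in $[0,1]$ and whose selections come from the dictionary carrying the $\A_1$-norm; there is no comparison principle converting super-unit weakness into a multiplicative prefactor $s^{-1/2}$ on $\gamma(m-1,rt)$, and you supply none --- you only assert the conversion, at exactly the point you concede ``essentially all of the work lies.'' The $s^{-1/2}$ in the theorem is not a per-iteration phenomenon, so this mechanism cannot be repaired.

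The paper's actual route is different in precisely this respect. It introduces the super-dictionary $\D^s$ of normalized $s$-term combinations of elements of $\D$, and proves (by splitting an arbitrary competing $s$-element set into indices that do or do not coincide with chosen elements, which is the correct replacement for your $S_{m-1}$ bound) that $\norm{p_m}\ge rt\,q_s$, where $q_s:=\sup_{g^s\in\D^s}\abs{\langle f_{m-1},g^s\rangle}$. Thus the WSGA($s,t$) on $\D$ \emph{is} a legitimate WGA($rt$) on $\D^s$: weakness $rt\le 1$, selections in $\D^s$, so $\gamma(m-1,rt)$ applies. The factor $s^{-1/2}$ then comes not from a per-step gain but from a smaller $\A_1(\D^s)$-norm of the first residual: with the threshold $a:=(t+3)/(2t)$, every $g_j$ with $\abs{c_j}\ge a/s$ is necessarily captured at the first iteration (a coherence argument using $s\le(2M)^{-1}$), and the remaining tail, whose coefficients are all at most $a/s$, is regrouped into consecutive blocks $\psi_l$ of $s$ terms; the Gram bound gives $\norm{\psi_l}\le(1+Ms)^{1/2}s^{1/2}\abs{c_{(l-1)s+\nu+1}}$, and summing yields $f_1\in\A_1\bigl(\D^s,\,Ct^{-1}s^{-1/2}\bigr)$. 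Hence $\norm{f_m}=\norm{(f_1)_{m-1}}\le Ct^{-1}s^{-1/2}\gamma(m-1,rt)$. Without this first-step cleaning and blocking estimate, your setup gives only $f\in\A_1(\D^s,1)$ (since $\D\subset\D^s$), hence the bound $\gamma(m-1,rt)$ with no factor $s^{-1/2}$ --- which is all your outline would deliver if completed honestly.
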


 Theorem \ref{wsga} with $t=1$ gives the
following assertion for the SGA($s$).
\begin{cor}\label{sga}
Let $\D$ be a dictionary with coherence parameter $M:=M(\D)$. Then,
for $s\le (2M)^{-1}$, the SGA($s$) provides, after $m$ iterations, an
approximation of $f\in A_1(\D)$ with the following upper bound on
the error:
\begin{equation*}
\|f-G^s_m(f,\D)\|\leq C s^{-\frac{1}{2}}\gamma(m,r),
\end{equation*}
where $r:=\Big(\frac{1-Ms}{1+Ms}\Big)^{\frac{1}{2}}$ and $C$ is an
absolute constant.
\end{cor}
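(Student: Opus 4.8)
The plan is to obtain the Corollary as the non‑weak specialization $t=1$ of Theorem~\ref{wsga}, reading that theorem's proof closely enough to account for the index in $\gamma$. Putting $t=1$ makes the weakness factor $t^{-1}$ equal to $1$ and collapses the effective weakness $rt$ into $r$, so the target reduces to $\|f-G^s_m(f,\D)\|\le Cs^{-1/2}\gamma(m,r)$. A literal substitution into the theorem yields this with $\gamma(m-1,r)$ in place of $\gamma(m,r)$; since $\gamma(\cdot,r)$ is non‑increasing in its first argument, the stated bound is formally a touch sharper, so substitution alone is not conclusive. Recovering the full $\gamma(m,r)$ hinges on the observation that the one‑iteration deficit in Theorem~\ref{wsga} is produced by the steps that accommodate a weakness $t<1$ and is not incurred when $t=1$; hence the real task is to confirm that the argument retains all $m$ super‑steps in the exact‑greedy regime.

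To do this I would reconstruct the skeleton of the theorem's proof. The hypothesis $s\le(2M)^{-1}$ together with Gershgorin's theorem places every eigenvalue of the Gram matrix of the $s$ atoms chosen at a super‑step in $[1-Ms,1+Ms]$, with $1-Ms\ge 1/2$, so each projection $P_{F_j}$ is well conditioned. This lets one replace each super‑step by a single normalized super‑atom direction $\tilde h_j$ for which $f_j=f_{j-1}-\langle f_{j-1},\tilde h_j\rangle\,\tilde h_j$, exhibiting the residual sequence $\{f_j\}$ as a weak greedy sequence of weakness $r$ relative to an enlarged dictionary $\D^{*}$ of normalized super‑atoms; here $r=\bigl((1-Ms)/(1+Ms)\bigr)^{1/2}$ is exactly the square root of the ratio of the lower to the upper Gram eigenvalue bound. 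Since $|f|_{A_1(\D^{*})}\le|f|_{A_1(\D)}\le 1$, the definition of $\gamma$ then supplies the baseline estimate $\|f_m\|\le\gamma(m,r)$.

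The decisive and hardest part is to upgrade this baseline by the factor $s^{-1/2}$, which the plain reduction discards because $\gamma$ is a worst‑case quantity whereas the super‑greedy residuals are genuinely better than a generic weak greedy sequence. The source of the gain is structural: a super‑step commits $s$ near‑orthogonal unit atoms, and a sum of $s$ such atoms has norm of order $\sqrt s$, so passing to the super‑atom normalization rescales coefficients by $\sqrt s$ and the operative $A_1$‑type quantity by $s^{-1/2}$. I would extract this by isolating the first super‑step, tracking the rescaling and using $\|f\|^2\le|f|_{A_1(\D)}\,\sup_{g\in\D}|\langle f,g\rangle|$ to control the first coefficient, and only then applying the $\gamma$‑reduction to the remaining super‑steps. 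The point I expect to demand the most care is showing that the $\sqrt s$ boost survives as a clean $s^{-1/2}$ in front of $\gamma$, rather than being swallowed into the unknown rate, and in verifying that at $t=1$ this first‑step accounting is lossless, so that all $m$ super‑steps are kept and $\gamma(m,r)$ — not $\gamma(m-1,r)$ — results.
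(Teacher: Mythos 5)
Your reading of the discrepancy is right: literal substitution of $t=1$ into Theorem~\ref{wsga} yields $C s^{-1/2}\gamma(m-1,r)$, while the Corollary asserts $C s^{-1/2}\gamma(m,r)$. The paper's own ``proof'' is exactly that substitution (it treats the shift from $m-1$ to $m$ as immaterial), so you have in fact spotted a real imprecision in the paper. However, your proposed repair rests on a false premise. The one-iteration loss in the proof of Theorem~\ref{wsga} has nothing to do with accommodating a weakness $t<1$: the first iteration is consumed, for \emph{every} $t$ including $t=1$, in order to pass from $f\in A_1(\D)$ to $f_1\in \A_1(\D^s,\,C s^{-1/2})$. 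Concretely, the first greedy selection is shown to sweep up all atoms $g_j$ with $|c_j|\ge a/s$ (where $a=(t+3)/(2t)$, which equals $2$ at $t=1$, so the step does not degenerate), and only after these large coefficients are removed does the blocking estimate $\sum_l\|\psi_l\|\le (1+Ms)^{1/2}(a+1)s^{-1/2}$ hold; this is the sole source of the $s^{-1/2}$ gain. The remaining $m-1$ iterations are then read as WGA($rt$) steps on $\D^s$ applied to $f_1$, giving $\|f_m\|\le |f_1|_{\A_1(\D^s)}\,\gamma(m-1,rt)$.

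Your ``baseline'' observation is correct but it is exactly the other horn of the dilemma: since $\D\subset\D^s$, one has $|f|_{\A_1(\D^s)}\le 1$ and hence $\|f_m\|\le\gamma(m,r)$ counting all $m$ steps, but with no $s^{-1/2}$; and $|f|_{\A_1(\D^s)}\le 1$ cannot be improved to $C s^{-1/2}$ (take $f=g_1$ a single dictionary element). So within this framework the factor $s^{-1/2}$ and the full count of $m$ steps cannot be obtained simultaneously, and the step you yourself flag as decisive --- making the first-step accounting ``lossless'' at $t=1$ --- is precisely where the plan fails. To get the Corollary as literally stated one would need a comparison of the form $\gamma(m-1,r)\le C'\gamma(m,r)$, which is not established in the paper and is not available off the shelf (even the order of decay of $\gamma$ is an open problem cited in the introduction). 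The honest conclusions are either to state the Corollary with $\gamma(m-1,r)$, or to note that in applications (after bounding $\gamma$ via Theorem~\ref{wga}) the distinction between $m$ and $m-1$ is absorbed into the constant.
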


It is interesting to note that even in the case of the SGA($s$), when the weakness parameter is $1$, we have the upper bound of the error in terms of
$\gamma(m,r)$ not in terms of $\gamma(m)$. For estimating $\gamma(m,r)$ we use the following known result  from \cite{T1}.

\begin{thm}\label{wga}
Let $\D$ be an arbitrary dictionary in $H$. Assume
$\tau:=\{t_k\}^\infty_{k=1}$ is a nonincreasing sequence. Then, for
$f\in \A_1(D,B)$ we have
\begin{equation}
\|f-G^{1,\tau}_m(f,\D)\|\leq B(1+\sum_{k=1}^m t_k^2)^{-t_m/2(2+t_m)}.
\end{equation}
\end{thm}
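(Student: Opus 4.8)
The plan is to prove this by the classical device of tracking, in parallel, the energy of the residual and the $\A_1$-norm in which the residual lives, and then solving the resulting coupled recursion. First I would normalize $B=1$, so that $f\in\A_1(\D,1)$ and $\norm f\le 1$. Write $f_m:=f-G^{1,\tau}_m(f,\D)$ for the residual, $y_m:=\langle f_{m-1},\varphi_m\rangle$ for the coefficient chosen at step $m$, and $s_m:=\sup_{g\in\D}\abs{\langle f_m,g\rangle}$. Two facts are immediate from the definition of the algorithm: since $f_m\perp\varphi_m$ we have the energy identity $\norm{f_m}^2=\norm{f_{m-1}}^2-y_m^2$, and the weakness rule gives $\abs{y_m}\ge t_m s_{m-1}$. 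Summing the first identity over $m$ shows the coefficients are square-summable, $\sum_m y_m^2\le\norm f^2\le 1$; this global budget is one of the two levers in the argument.

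The subtlety — and the reason the exponent is the slow pure-greedy exponent rather than the orthogonal-greedy one — is that the residual $f_{m-1}$ is \emph{not} orthogonal to the current approximant, so one cannot write $\langle f_{m-1},f\rangle=\norm{f_{m-1}}^2$. The device I would use is to follow the $\A_1$-norm of the residual itself. Since $f_{m-1}=f-\sum_{j<m}y_j\varphi_j$ with $f\in\A_1(\D,1)$ and each $\varphi_j\in\D$, we have $f_{m-1}\in\A_1(\D,V_{m-1})$ with $V_{m-1}:=1+\sum_{j<m}\abs{y_j}$. Testing the residual against its own $\A_1$-representation yields the duality bound $\norm{f_{m-1}}^2\le V_{m-1}\,s_{m-1}$, whence $s_{m-1}\ge\norm{f_{m-1}}^2/V_{m-1}$. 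Combining this with $\abs{y_m}\ge t_m s_{m-1}$ and the energy identity produces the coupled recursion
\[
\norm{f_m}^2\le\norm{f_{m-1}}^2-t_m^2\,\frac{\norm{f_{m-1}}^4}{V_{m-1}^2},\qquad V_m=V_{m-1}+\abs{y_m}\le V_{m-1}+\norm{f_{m-1}}.
\]
This pair carries the whole content of the theorem: the denominator $V_{m-1}$ records exactly how the failure of orthogonality slows the decay, and $\abs{y_m}\le\norm{f_{m-1}}$ keeps the growth of $V_m$ tied to the decrement of the energy.

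The remaining and principal task is to solve this recursion and extract the precise exponent. Writing $a_m:=\norm{f_m}^2$, the plan is to prove by induction on $m$ that $a_m\le\big(1+\sum_{k=1}^m t_k^2\big)^{-t_m/(2+t_m)}$, which is the squared form of the claimed bound. The induction would pass from the one-step decrement $a_m\le a_{m-1}\big(1-t_m^2 a_{m-1}/V_{m-1}^2\big)$ to the target power via the elementary concavity inequality $(1-x)^\theta\le 1-\theta x$ for $\theta\in(0,1)$, and it relies on the monotonicity of $\tau$ so that the exponents $t_m/(2+t_m)$ at consecutive steps can be compared. The main obstacle I anticipate is precisely this bookkeeping: one must show that $V_m$ grows no faster than a power of $1+\sum_{k\le m}t_k^2$ that is compatible with the claimed decay of $a_m$ — a self-consistency between the growth of the residual's $\A_1$-norm and the decay of its energy — and it is the balance between these two, enforced by the budget $\sum_k y_k^2\le 1$ and the monotonicity of $\tau$, that pins down the exponent $t_m/(2(2+t_m))$.
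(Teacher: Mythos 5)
The paper itself contains no proof of Theorem \ref{wga}: it is quoted as a known result from \cite{T1}, so your attempt has to be measured against the standard proof given there. Your setup coincides exactly with the opening of that proof: the energy identity $\|f_m\|^2=\|f_{m-1}\|^2-y_m^2$, the duality bound $\|f_{m-1}\|^2\le V_{m-1}s_{m-1}$, and the coupled recursion for $a_m:=\|f_m\|^2$ and $V_m$ are all correct. But your argument stops exactly where the real work begins: you never solve the recursion, and the tools you propose for doing so would not suffice. The energy budget $\sum_k y_k^2\le 1$ gives, by Cauchy--Schwarz, at best $V_m\le 1+\sum_{k\le m}y_k\le 1+\sqrt m$, and inserting $V_{m-1}^2\sim m$ into the decrement $a_m\le a_{m-1}\bigl(1-t_m^2a_{m-1}/V_{m-1}^2\bigr)$ and applying the recursion lemma (Lemma \ref{l1} of the paper, also from \cite{T1}) yields only $a_m\lesssim m\bigl(1+\sum_{k\le m}t_k^2\bigr)^{-1}$, which for constant $t_k=t$ is $O(1)$: no decay at all. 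There is simply no a priori bound on $V_m$ by a suitable power of $1+\sum_{k\le m}t_k^2$, which is why a single induction hypothesis on $a_m$, plus the concavity inequality $(1-x)^\theta\le 1-\theta x$, cannot close; the ``bookkeeping obstacle'' you flag is not bookkeeping but the central idea of the theorem.

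The missing device is a second, \emph{multiplicative} estimate that converts growth of $V$ into decay of $a$, so that the two regimes (large $V_m$, small $V_m$) can be played against each other. From $y_m\ge t_m a_{m-1}/V_{m-1}$ one gets $a_m=a_{m-1}-y_m^2\le a_{m-1}\bigl(1-t_m y_m/V_{m-1}\bigr)\le a_{m-1}\bigl(1+y_m/V_{m-1}\bigr)^{-t_m}\le a_{m-1}\bigl(V_m/V_{m-1}\bigr)^{-t_m}$, using $(1-tx)(1+x)^{t}\le 1$; since $\tau$ is nonincreasing and $V_k/V_{k-1}\ge 1$, telescoping gives $a_m\le a_0V_m^{-t_m}\le V_m^{-t_m}$ (this is where the monotonicity of $\tau$ actually enters). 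Pair this with the bound you intended: since $V_{k-1}\le V_m$ for $k\le m$, Lemma \ref{l1} with $A=V_m^2$ gives $a_m\le V_m^2\bigl(1+\sum_{k\le m}t_k^2\bigr)^{-1}$. These two bounds move in opposite directions as functions of the unknown $V_m$, so $a_m\le\min\bigl\{V_m^{-t_m},\,V_m^2S_m^{-1}\bigr\}$ with $S_m:=1+\sum_{k\le m}t_k^2$, and the worst case $V_m^{2+t_m}=S_m$ yields $a_m\le S_m^{-t_m/(2+t_m)}$, i.e., the stated bound after taking square roots. Without this two-bound balancing (or an equivalent mechanism), your induction has no way to control $V_{m-1}$ in the decrement, and the proof does not go through.
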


For a particular case $t_k=1$, $k=1,2,\dots$, this theorem gives the following result (see \cite{DT}).
For each $f\in
\A_1(\D,B)$, the PGA provides, after $m$ iterations, an approximant   satisfying
$$
\|f-G_m(f,\D)\|\leq B m^{-1/6}.
$$

For $f\in A_1(\D)$, we apply the PGA and the SGA($s$). Then after $sm$
iterations of the PGA and $m$ iterations of the SGA($s$), both
algorithms provide $sm$-term approximants. For illustration purposes, take  $s=N^{1/2}$ and
$m=N^{1/2}$. Then the PGA gives
$$
\|f_N\|\leq (sm)^{-1/6}=N^{-1/6}
$$
and the SGA($s$) provides
$$
\|f_m\|\leq
Cs^{-1/2}m^{-\frac{r}{2(2+r)}}=N^{-\frac{1}{4}-\frac{1}{2}\theta},
$$
where $\theta:=\frac{r}{2(2+r)}$ and
$r=\Big(\frac{1-Ms}{1+Ms}\Big)^{1/2}$,
$
\frac{1}{4}+\frac{1}{2}\theta\geq \frac{1}{6}.
$
 Thus, in this particular case, the SGA($s$) has a better upper bound for the error than the PGA.

\section{Weak Orthogonal Super Greedy Algorithm with Thresholding}

In \cite{LT} we considered the following algorithm. Let a natural number $s$ and a weakness sequence
$\tau:=\{t_k\}_{k=1}^\infty$, $t_k\in [0,1]$, be given. Consider
the following Weak Orthogonal Super Greedy Algorithm with
parameter $s$.

{\bf WOSGA($s,\tau$).} Initially, $f_0:=f$. Then, for each $m\ge 1$
we inductively define:

(1) $\varphi_{(m-1)s+1},\dots,\varphi_{ms}\in \D$ are elements of
the dictionary $\D$ satisfying the following inequality. Denote
$I_m:=[(m-1)s+1,ms]$ and assume that
$$
\min_{i\in I_m}|\langle f_{m-1},\varphi_i\rangle |\ge
t_m\sup_{g\in \D, g\neq \varphi_i, i\in I_m} |\langle
f_{m-1},g\rangle |.
$$

(2) Let $H_m:=H_m(f):=\spn(\varphi_1,\dots,\varphi_{ms})$ and let
$P_{H_m}$ denote an operator of orthogonal projection onto $H_m$.
Define
$$
G_m(f):=G_m(f,\D):= G_m^s(f,\D):= P_{H_m}(f).
$$

(3) Define the residual after $m$th iteration of the algorithm
$$
f_m:= f^s_m:= f-G_m(f,\D).
$$

 In \cite{LT} we proved the following error bound for the WOSGA($s,t$).

\begin{thm}\label{Theorem 28.3}
Let $\D$ be a dictionary with coherence parameter $M:=M(\D)$. Then,
for $s\le (2M)^{-1}$, the WOSGA($s,t$) provides, after $m$
iterations, an approximation of $f\in A_1(\D)$ with the following
upper bound on the error:
$$
\|f_m\|^2 \le A(t)(sm)^{-1},\quad m=1,2,\dots\quad
A(t):=(81/8)(1+t)^2t^{-4}.
$$
\end{thm}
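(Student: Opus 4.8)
The plan is to control the squared residual norms $a_m:=\|f_m\|^2$ by a one-step recursive inequality of the form $a_m\le a_{m-1}-\beta a_{m-1}^2$, with $\beta$ proportional to $s\,t^4(1+t)^{-2}$, and then to solve this recursion by the standard telescoping of $1/a_m$. Since $f\in A_1(\D)$, I first record the initial bound $a_0=\|f\|^2\le 1$: any representation $f=\sum_k c_k g_k$ with $\sum_k|c_k|\le 1$ gives $\|f\|\le\sum_k|c_k|\le 1$, and this passes to the closure. Throughout I use that the residual $f_{m-1}$ is orthogonal to $H_{m-1}$, so that $\langle f_{m-1},f\rangle=\|f_{m-1}\|^2$, since $f-f_{m-1}=G_{m-1}(f)\in H_{m-1}$.

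The first main ingredient is a lower bound on the inner products of the residual against the selected atoms. From $\|f_{m-1}\|^2=\langle f_{m-1},f\rangle=\sum_k c_k\langle f_{m-1},g_k\rangle$ together with $\sum_k|c_k|\le 1$ one gets $\sup_{g\in\D}|\langle f_{m-1},g\rangle|\ge\|f_{m-1}\|^2$. Feeding this into the weak selection rule of step (1) forces every chosen $\varphi_i$, $i\in I_m$, to satisfy a bound of the form $|\langle f_{m-1},\varphi_i\rangle|\ge c\,t^2(1+t)^{-1}\|f_{m-1}\|^2$. The delicate point is that the supremum in the selection rule omits the atoms $\{\varphi_i\}_{i\in I_m}$ themselves, so one must separate the case in which the dictionary supremum is attained on an already chosen atom from the case in which it is not; tracking this case analysis is precisely what produces the factor $(1+t)$ and the correct power of $t$ in $A(t)$, and I expect it to be the principal obstacle.

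The second ingredient converts these $s$ large coefficients into a genuine drop of the residual norm, and this is where the super-greedy gain (the factor $s$) originates. Because $H_{m-1}\subset H_m$ and $f_{m-1}\perp H_{m-1}$, we have $a_{m-1}-a_m=\|P_{H_m}(f_{m-1})\|^2\ge\|P_{F}(f_{m-1})\|^2$, where $F:=\spn(\varphi_i,\,i\in I_m)$. The squared norm of the projection onto $F$ equals $b^\ast G^{-1}b$, with $b:=(\langle f_{m-1},\varphi_i\rangle)_{i\in I_m}$ and $G$ the Gram matrix of the selected atoms. The coherence hypothesis $s\le(2M)^{-1}$ keeps the off-diagonal mass of $G$ small: Gershgorin places the spectrum of $G$ in $[1-Ms,1+Ms]\subseteq[\tfrac12,\tfrac32]$, so $b^\ast G^{-1}b\ge(1+Ms)^{-1}\sum_{i\in I_m}|\langle f_{m-1},\varphi_i\rangle|^2\ge\tfrac23\sum_{i\in I_m}|\langle f_{m-1},\varphi_i\rangle|^2$. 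Inserting the coefficient lower bound and $|I_m|=s$ yields $a_{m-1}-a_m\ge\beta a_{m-1}^2$ with $\beta$ of order $s\,t^4(1+t)^{-2}$; it is the near-orthogonality of the $s$ directions that lets these $s$ comparable energies add, rather than collapse to a single-atom estimate.

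It remains to solve the recursion. Writing the one-step inequality as $a_m\le a_{m-1}-\beta a_{m-1}^2$ and using monotonicity $0\le a_m\le a_{m-1}$, we get $1/a_m-1/a_{m-1}=(a_{m-1}-a_m)/(a_m a_{m-1})\ge\beta a_{m-1}/a_m\ge\beta$, whence $1/a_m\ge 1/a_0+\beta m\ge\beta m$ and therefore $a_m\le(\beta m)^{-1}=A(t)(sm)^{-1}$, with $A(t)=(81/8)(1+t)^2t^{-4}$ once the explicit value of $\beta$ is substituted. The qualitative order $(sm)^{-1}$ falls out immediately from this telescoping; the real work, and the only genuinely nonroutine step, is calibrating the constants in the second paragraph sharply enough to reach the stated $A(t)$ rather than merely the correct order.
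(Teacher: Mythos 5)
Two preliminary remarks: this paper does not actually prove Theorem \ref{Theorem 28.3} (it is quoted from \cite{LT}), and the outline you give is essentially the paper's proof of Theorem \ref{OGAT} for the \emph{thresholding} algorithm WOSGAT, transplanted to the WOSGA. For the WOSGAT that structure is sound, because the thresholding step \emph{postulates} $|\langle f_{m-1},\varphi_i\rangle|\ge t\|f_{m-1}\|^2$ for every selected atom; then the Bessel/coherence bound and the recursion lemma finish the job. Your plan stands or falls with the analogous claim in your second paragraph: that the WOSGA selection rule forces $|\langle f_{m-1},\varphi_i\rangle|\ge c\,t^2(1+t)^{-1}\|f_{m-1}\|^2$ for \emph{every} $i\in I_m$. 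That claim is false, and so is the resulting one-step recursion $a_m\le a_{m-1}-\beta a_{m-1}^2$ with $\beta$ proportional to $s$ (here $a_m=\|f_m\|^2$). This is a structural gap, not a matter of calibrating constants.

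The reason is exactly the feature you flag: the supremum in step (1) omits the selected atoms, so the case analysis yields a dichotomy, not a uniform bound. Either the unselected supremum is at least $(1+t)^{-1}\|f_{m-1}\|^2$, in which case all $s$ selected inner products are large and you get the $s$-fold gain; or the correlation is carried by atoms that are themselves being selected, in which case the remaining selected atoms may have arbitrarily small (even zero) inner products, and no per-atom bound of your form holds. The recursion itself then fails: take $\D$ an orthonormal basis $\{e_i\}_{i\ge 0}$ (so $M=0$ and every $s$ is admissible) and $f=\frac12 e_0+\frac{1}{2N}\sum_{i=1}^N e_i$ with $N>s$. Selecting $\varphi_1=e_0$ together with $s-1$ of the $e_i$, $i\ge 1$, is a legal greedy step even for $t=1$, since the minimal selected inner product $\frac{1}{2N}$ equals the unselected supremum; it gives $a_0=\frac14+\frac1{4N}$ and a drop $a_0-a_1=\frac14+\frac{s-1}{4N^2}\approx\frac14$, whereas your recursion (with $\beta=\frac{8}{81}\,s\,t^4(1+t)^{-2}$, the value needed to produce $A(t)$) demands a drop of at least $\beta a_0^2\approx\frac{s}{648}$ at $t=1$, which exceeds $\frac14$ --- indeed exceeds $a_0$, forcing $a_1<0$ --- once $s>162$, while in fact $a_1>0$. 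The correct argument must handle the second case by a different mechanism: there the selected atoms absorb at least $\frac{t}{1+t}\|f_{m-1}\|$ of the $\ell_1$-mass of $f$, and since the total mass is at most $1$ and $\|f_m\|$ is bounded by the mass not yet captured, such iterations can occur only a controlled number of times. That cross-iteration bookkeeping, absent from your proposal, is what costs the extra factor; your good case alone would give $A(t)\sim(1+t)^2t^{-2}$, and the stated $t^{-4}$ is the signature of the part of the proof you have skipped.
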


In this paper we modify the WOSGA in the following way: we replace the greedy step (1) by the thresholding step. Here is the definition of the new algorithm.
 Let $s$ be a natural number and let a weakness sequence
$\tau:=\{t_k\}_{k=1}^\infty$, $t_k\in [0,1]$ be given.

{\bf WOSGAT($s,\tau$).} Initially, $f_0:=f^{s,\tau}_0:=f$. Then for
each $m\ge 1$ we inductively define:
\begin{itemize}
\item[(1)] $\varphi_{i}\in \D$ where $i_m \in I_m$ are elements of
the dictionary $\D$ satisfying the following inequalities
$s_m:=|I_m|\le s$ and
\begin{equation}\label{gr}
\min_{i\in I_m}|\langle f_{m-1},\varphi_i\rangle |\ge
t_m\|f_{m-1}\|^2.
\end{equation}

\item[(2)] Let $H_m:=H_m(f):=\spn(\varphi_i, \  i\in I_1 \cup \ldots \cup I_m)$ and let
$P_{H_m}$ denote an operator of orthogonal projection onto $H_m$.
Denote
$$
G_m(f):=G^{s,\tau}_m(f,\D):= P_{H_m}(f).
$$

\item[(3)] Define the residual after $m$th iteration of the algorithm
$$
f_m:= f^{s,\tau}_m:= f -P_{H_m}(f).
$$
\end{itemize}

For $s=1$ the WOSGA coincides with the Weak Orthogonal Greedy Algorithm (WOGA) and the WOSGAT coincides with the Modified Weak Orthogonal Greedy Algorithm (MWOGA) (see \cite{T5}, p. 61). We note that we can run the WOSGA and the WOGA for any $f\in H$. It is proved in \cite{T5} that we can run the MWOGA for $f\in A_1(\D)$. In the same way one can prove that we can run the WOSGAT for $f\in A_1(\D)$.
We note that in step (1) of the WOSGAT, if there are more than $s$ $\varphi_i$'s satisfying
$$
|\langle f_{m-1},\varphi_i\rangle |\ge
t_m\|f_{m-1}\|^2,
$$
the algorithm may pick any $s$ of them and then make the projection.

If $t_m=t$ for $m=1,2,\ldots$, we  use $t$ instead of $\tau$ in the notation.
We will prove an upper bound for the rate of convergence of the WOSGAT for $f\in A_1(\D)$ for a more general dictionary than the $M$-coherent dictionary.
\begin{defn}\label{Bessel}
We say that a dictionary $\D$ is $(N,\beta)$-Bessel if for any $N$ distinct elements $\psi_1,\dots,\psi_N$ of the dictionary $\D$ we have for any $f\in H$
$$
\|P_{\Psi(N)}(f)\|^2 \ge \beta\sum_{i=1}^N|\langle f, \psi_i\rangle|^2,
$$
where $\Psi(N):=\spn\{\psi_1,\dots,\psi_N\}$.
\end{defn}
\begin{thm}\label{OGAT}
Let $\D$ be an $(N,\beta)$-Bessel dictionary. Then, for $s\le N$,
the WOSGAT($s,t$) provides, after $m$ iterations, an approximation
of $f\in A_1(\D)$ with the following upper bound of the error:
$$
\|f-G_m(f)\|\le  (1+\beta t^2\sum_{j=1}^m s_j)^{-1/2}.
$$
\end{thm}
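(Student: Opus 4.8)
The plan is to establish a one-step energy reduction and then telescope a recursion for the reciprocal of the squared residual norm. Write $a_m := \|f_m\|^2 = \|f - G_m(f)\|^2$. Since $G_m(f) = P_{H_m}(f)$ is an orthogonal projection onto the nested spaces $H_{m-1}\subseteq H_m$, the increment $f_{m-1}-f_m = P_{H_m}(f) - P_{H_{m-1}}(f) = P_{H_m}(f_{m-1})$ lies in $H_m$, while $f_m \perp H_m$. Hence by the Pythagorean identity
$$
\|f_{m-1}\|^2 - \|f_m\|^2 = \|P_{H_m}(f_{m-1})\|^2 .
$$
I would then bound the right-hand side from below using only the $s_m$ newly chosen elements: with $\Psi_m := \spn(\varphi_i, i\in I_m)\subseteq H_m$ we have $\|P_{H_m}(f_{m-1})\|^2 \ge \|P_{\Psi_m}(f_{m-1})\|^2$.

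Next I would invoke the Bessel property. Although Definition \ref{Bessel} is stated for exactly $N$ elements, it passes to any collection of $s_m \le s \le N$ distinct elements: extend the selected elements to $N$ distinct ones, observe that the Gram matrix of the $s_m$ elements is a principal submatrix of the Gram matrix of the $N$ elements, and use Cauchy interlacing (the lower frame bound is equivalent to an upper bound on the largest eigenvalue of the Gram matrix, which can only decrease for principal submatrices). Applying this to $\Psi_m$ gives
$$
\|P_{\Psi_m}(f_{m-1})\|^2 \ge \beta\sum_{i\in I_m}|\langle f_{m-1},\varphi_i\rangle|^2 ,
$$
and the thresholding rule \eqref{gr} with $t_m = t$ yields $|\langle f_{m-1},\varphi_i\rangle| \ge t\|f_{m-1}\|^2$ for every $i \in I_m$. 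Combining the three displays produces the one-step reduction
$$
a_{m-1}-a_m \ge \beta t^2 s_m\, a_{m-1}^2 .
$$

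Finally I would convert this into a bound on $a_m$ by passing to reciprocals. If $a_{m-1}=0$ the residual already vanishes; otherwise $a_m\ge 0$ forces $\beta t^2 s_m a_{m-1}\le 1$, and from $a_m \le a_{m-1}\bigl(1-\beta t^2 s_m a_{m-1}\bigr)$ together with $(1-x)^{-1}\ge 1+x$ for $x\in[0,1)$ one obtains
$$
\frac{1}{a_m} \ge \frac{1}{a_{m-1}} + \beta t^2 s_m .
$$
Telescoping from $m$ down to $0$ and using $a_0 = \|f\|^2 \le |f|_{\A_1(\D)}^2 \le 1$ (so $1/a_0 \ge 1$) gives $1/a_m \ge 1 + \beta t^2\sum_{j=1}^m s_j$; taking square roots is exactly the claimed estimate.

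The individual steps are routine, and I expect no deep obstacle. The two places I would be most careful are (i) justifying that the Bessel inequality survives the restriction to the $s_m \le N$ elements actually selected at step $m$, which is the only point where the hypothesis $s\le N$ is used, and (ii) the preliminary observation that the algorithm can in fact be run for $f\in\A_1(\D)$. The latter rests on the fact that for such $f$ one has $\sup_{\varphi\in\D}|\langle f_{m-1},\varphi\rangle|\ge \|f_{m-1}\|^2$: indeed $f_{m-1}\perp H_{m-1}$ gives $\|f_{m-1}\|^2 = \langle f_{m-1},f\rangle$, and expanding $f=\sum_k c_k g_k$ with $\sum_k|c_k|\le 1$ bounds this by $\sup_{\varphi}|\langle f_{m-1},\varphi\rangle|$, so at least one dictionary element meeting the threshold $t\|f_{m-1}\|^2$ exists at every step.
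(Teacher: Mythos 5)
Your proof is correct and takes essentially the same route as the paper's: the one-step reduction $\|f_m\|^2\le\|f_{m-1}\|^2-\|P_{F_m}(f_{m-1})\|^2$, the $(N,\beta)$-Bessel bound combined with the thresholding condition \eqref{gr} to obtain $a_{m-1}-a_m\ge \beta t^2 s_m a_{m-1}^2$, and a telescoping recursion (the paper simply cites Lemma \ref{l1} from \cite{T1}, whose content is exactly your reciprocal argument). Your two additions are refinements rather than deviations, and both are welcome: you justify applying Definition \ref{Bessel} to the $s_m\le N$ elements actually selected (via the Gram-matrix/interlacing argument; the paper invokes the definition for fewer than $N$ elements without comment), and you verify $a_0\le 1$, the degenerate zero-residual cases, and the fact that the algorithm can be run on $A_1(\D)$ explicitly.
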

We point out that $\sum_{j=1}^m s_j$ is the number of elements that the algorithm picked up
from $\D$ after $m$ iterations. Therefore, the WOSGAT offers the same error bound (in the sense of order) in terms of the number of elements of the dictionary, used in the approximant, as the
WOGA or the MWOGA.

We now give some sufficient conditions for a dictionary $\D$ to be $(N,\beta)$-Bessel. We begin with a simple lemma useful in that regard.
\begin{lem}\label{L2.1} Let a dictionary $\D$ have the following property of $(N,A)$-stability. For any $N$ distinct elements $\psi_1,\dots,\psi_N$ of the dictionary $\D$, we have for any coefficients $c_1,\dots,c_N$
$$
\|\sum_{i=1}^Nc_i\psi_i\|^2\le A\sum_{i=1}^N|c_i|^2.
$$
Then $\D$ is $(N,A^{-1})$-Bessel.
\end{lem}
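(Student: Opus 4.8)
The plan is to work directly with the orthogonal projection $g:=P_{\Psi(N)}(f)$ and to exploit its defining property that $f-g\perp\Psi(N)$, whence $\langle f,\psi_i\rangle=\langle g,\psi_i\rangle$ for each $i=1,\dots,N$. Writing $a_i:=\langle f,\psi_i\rangle$, the target inequality $\|g\|^2\ge A^{-1}\sum_{i=1}^N|a_i|^2$ is what we must reach. The device I would use is the auxiliary element $h:=\sum_{i=1}^N a_i\psi_i$, which lies in $\Psi(N)$ and whose norm is controlled by the stability hypothesis.

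First I would compute $\sum_{i=1}^N|a_i|^2=\sum_{i=1}^N a_i\langle g,\psi_i\rangle=\langle g,h\rangle$, using $\langle g,\psi_i\rangle=a_i$ from the previous step. Then I would estimate $\langle g,h\rangle$ in two ways: by Cauchy--Schwarz in $H$, $\langle g,h\rangle\le\|g\|\,\|h\|$, and by the $(N,A)$-stability property applied to the coefficients $a_1,\dots,a_N$, $\|h\|^2\le A\sum_{i=1}^N|a_i|^2$, so that $\|h\|\le A^{1/2}\big(\sum_{i=1}^N|a_i|^2\big)^{1/2}$.

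Combining the two estimates yields $\sum_{i=1}^N|a_i|^2\le A^{1/2}\|g\|\big(\sum_{i=1}^N|a_i|^2\big)^{1/2}$. If $\sum_i|a_i|^2=0$ the Bessel inequality holds trivially; otherwise I would divide through by $\big(\sum_i|a_i|^2\big)^{1/2}$ to obtain $\big(\sum_i|a_i|^2\big)^{1/2}\le A^{1/2}\|g\|$, i.e. $\|g\|^2\ge A^{-1}\sum_{i=1}^N|a_i|^2$. Since $a_i=\langle f,\psi_i\rangle$ and $g=P_{\Psi(N)}(f)$, this is precisely the $(N,A^{-1})$-Bessel inequality of Definition \ref{Bessel}.

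I do not expect a substantial obstacle: the lemma is the standard duality between an upper (Bessel-type) bound on the synthesis map $(c_i)\mapsto\sum_i c_i\psi_i$ and a lower bound on orthogonal projections, and the whole argument reduces to a single Cauchy--Schwarz step once the correct test element $h=\sum_i\langle f,\psi_i\rangle\psi_i$ is singled out. The only point requiring care is the identity $\langle f,\psi_i\rangle=\langle g,\psi_i\rangle$, which is exactly where the orthogonal projection enters the proof.
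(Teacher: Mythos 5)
Your proof is correct and is essentially the paper's own argument: both rest on the identity $\langle P_{\Psi(N)}(f),\psi\rangle=\langle f,\psi\rangle$ for $\psi\in\Psi(N)$, the $(N,A)$-stability bound, and Cauchy--Schwarz. The only difference is presentational: the paper writes $\|P_{\Psi(N)}(f)\|$ as a supremum of $|\sum_{i=1}^N\langle f,\psi_i\rangle c_i|$ over the unit ball of $\Psi(N)$ and restricts that supremum to the $\ell_2$ ball of radius $A^{-1/2}$, whereas you test against the single extremal element $h=\sum_{i=1}^N\langle f,\psi_i\rangle\psi_i$ (the maximizer of that supremum) and divide through.
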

\begin{proof} Let $f\in H$. We have
$$
\|P_{\Psi(N)}(f)\| = \sup_{\psi\in\Psi(N), \|\psi\|\le 1}|\langle P_{\Psi(N)}(f),\psi\rangle|
$$
$$
=\sup_{(c_1,\dots,c_N):\|c_1\psi_1+\dots+c_N\psi_N\|\le 1}|\sum_{i=1}^N\langle f,\psi_i\rangle c_i|
$$
$$
\ge \sup_{(c_1,\dots,c_N):|c_1|^2+\dots+|c_N|^2\le A^{-1}}|\sum_{i=1}^N\langle f,\psi_i\rangle c_i| = A^{-1/2}(\sum_{i=1}^N|\langle f,\psi_i\rangle|^2)^{1/2}.
$$
\end{proof}

\begin{prop}\label{prop1} An $M$-coherent dictionary is $(N,(1+M(N-1))^{-1})$-Bessel.
\end{prop}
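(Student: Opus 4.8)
The plan is to combine Lemma \ref{L2.1} with a direct Gram-matrix estimate. Lemma \ref{L2.1} tells me that to certify the $(N,(1+M(N-1))^{-1})$-Bessel property it is enough to establish the \emph{dual} quantity: namely that an $M$-coherent dictionary enjoys $(N,A)$-stability with $A=1+M(N-1)$. In other words, the whole proposition reduces to proving, for any $N$ distinct dictionary elements $\psi_1,\dots,\psi_N$ and any real coefficients $c_1,\dots,c_N$, the inequality
\begin{equation*}
\Big\|\sum_{i=1}^N c_i\psi_i\Big\|^2 \le (1+M(N-1))\sum_{i=1}^N c_i^2 .
\end{equation*}

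To prove this stability bound, I would expand the squared norm using the inner product, isolating the diagonal from the off-diagonal terms:
\begin{equation*}
\Big\|\sum_{i=1}^N c_i\psi_i\Big\|^2 = \sum_{i=1}^N c_i^2\|\psi_i\|^2 + \sum_{i\neq j} c_i c_j\langle \psi_i,\psi_j\rangle .
\end{equation*}
Since every $\psi_i$ has unit norm, the diagonal contributes exactly $\sum_i c_i^2$. For the off-diagonal part I would invoke the coherence bound $|\langle \psi_i,\psi_j\rangle|\le M$ for $i\neq j$, giving $\bigl|\sum_{i\neq j} c_i c_j\langle\psi_i,\psi_j\rangle\bigr|\le M\sum_{i\neq j}|c_i||c_j|$.

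The one estimate that needs care is controlling $\sum_{i\neq j}|c_i||c_j|$ by a multiple of $\sum_i c_i^2$; this is where the factor $N-1$ enters. I would apply the elementary inequality $|c_i||c_j|\le \tfrac12(c_i^2+c_j^2)$ to each ordered off-diagonal pair. Summing over all ordered pairs $(i,j)$ with $i\neq j$, each index appears in $N-1$ terms, so the right-hand side collapses to $(N-1)\sum_i c_i^2$. Substituting back yields $\|\sum_i c_i\psi_i\|^2\le \sum_i c_i^2 + M(N-1)\sum_i c_i^2 = (1+M(N-1))\sum_i c_i^2$, which is precisely $(N,1+M(N-1))$-stability. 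An appeal to Lemma \ref{L2.1} then finishes the argument. I do not anticipate a genuine obstacle here; the only subtlety is the pairing/counting in the off-diagonal sum, and one should double-check that the AM--GM step is tight enough to land the clean constant $N-1$ rather than a worse bound.
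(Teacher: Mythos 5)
Your proof is correct, and its overall structure is exactly the paper's: reduce the Bessel property to $(N,1+M(N-1))$-stability and then invoke Lemma \ref{L2.1}. The only difference is in how the stability inequality $\|\sum_{i=1}^N c_i\psi_i\|^2\le (1+M(N-1))\sum_{i=1}^N c_i^2$ is obtained: the paper simply cites it as Lemma 2.1 of the Donoho--Elad--Temlyakov reference, whereas you prove it from scratch by expanding the Gram form, bounding the off-diagonal terms by coherence, and using $|c_i||c_j|\le \tfrac12(c_i^2+c_j^2)$ with the correct count of $N-1$ occurrences of each index over the ordered off-diagonal pairs. Your version is thus self-contained where the paper's is not, at the cost of a few extra lines; the counting step you flagged as the only delicate point is indeed carried out correctly.
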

\begin{proof} By Lemma 2.1 from \cite{DET} for an $M$-coherent dictionary we have
$$
\|\sum_{i=1}^N c_i\psi_i\|^2 \le (1+M(N-1))\sum_{i=1}^N |c_i|^2.
$$
Applying the above Lemma \ref{L2.1}, we obtain the statement of the proposition.
\end{proof}

The following proposition is a direct corollary of Lemma \ref{L2.1}.

\begin{prop}\label{prop2} Let a dictionary $\D$ have the RIP$(N,\delta)$: for any distinct $\psi_1,\dots,\psi_N$
$$
(1-\delta)\sum_{i=1}^N |c_i|^2\le  \|\sum_{i=1}^N c_i\psi_i\|^2\le (1+\delta)\sum_{i=1}^N |c_i|^2.
$$
Then $\D$ is $(N,(1+\delta)^{-1})$-Bessel.
\end{prop}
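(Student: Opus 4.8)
The plan is to recognize that the statement is, as the surrounding text indicates, an immediate application of Lemma \ref{L2.1}, so the only real task is to match hypotheses. First I would isolate the upper half of the RIP$(N,\delta)$ condition, namely that for any distinct $\psi_1,\dots,\psi_N$ and any coefficients $c_1,\dots,c_N$ one has $\norm{\sum_{i=1}^N c_i\psi_i}^2\le (1+\delta)\sum_{i=1}^N|c_i|^2$. This is verbatim the $(N,A)$-stability hypothesis of Lemma \ref{L2.1} with the choice $A=1+\delta$; no manipulation of the inequality is needed beyond reading off the constant.

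Having made that identification, I would simply invoke Lemma \ref{L2.1}, which asserts that $(N,A)$-stability implies the dictionary is $(N,A^{-1})$-Bessel. Substituting $A=1+\delta$ yields that $\D$ is $(N,(1+\delta)^{-1})$-Bessel, which is exactly the claimed conclusion. The Bessel constant $(1+\delta)^{-1}$ is thus produced directly as the reciprocal of the upper stability constant, with no further estimate required.

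The only point worth flagging, which is the closest thing to an obstacle here, is that the RIP is a two-sided condition whereas $(N,A)$-stability is one-sided: the lower bound $(1-\delta)\sum_i|c_i|^2\le \norm{\sum_i c_i\psi_i}^2$ plays no role and is simply discarded. I would make explicit that the Bessel property, being a lower bound on $\norm{P_{\Psi(N)}(f)}^2$ in terms of the correlations $\langle f,\psi_i\rangle$, is controlled entirely by the upper stability bound on the system $\{\psi_i\}$, exactly as in the proof of Lemma \ref{L2.1}. Beyond that observation the argument is a one-line specialization.
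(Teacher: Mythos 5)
Your proof is correct and is exactly the paper's argument: the paper states Proposition \ref{prop2} as a direct corollary of Lemma \ref{L2.1}, obtained precisely by reading off the upper RIP bound as $(N,1+\delta)$-stability and discarding the lower bound. Nothing further is needed.
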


\section{Proofs}

\noindent{\bf Proof of Theorem \ref{wsga}.}  Let
\begin{equation}\label{4.2}
f=\sum_{j=1}^\infty c_jg_j,\quad g_j\in \D,\quad \sum_{j=1}^\infty
|c_j|\le 1,\quad |c_1|\ge |c_2|\ge \dots\quad .
\end{equation}
Every element of $A_1(\D)$ can be approximated arbitrarily well by
elements of the form (\ref{4.2}). It will be clear from the below
argument that it is sufficient to consider elements $f$ of the
form (\ref{4.2}). Suppose $\nu$ is such that $|c_\nu|\ge a/s\ge
|c_{\nu+1}|$, where $a=\frac{t+3}{2t}$. Then the above
assumption on the sequence $\{c_j\}$ implies that $\nu\le \lfloor
s/a \rfloor$ and $|c_{s+1}|<1/s$. We claim that elements
$g_1,\dots,g_\nu$ will be chosen among $\varphi_1,\dots,\varphi_s$
at the first iteration. Indeed, for $j\in [1,\nu]$ we have
$$
|\langle f,g_j\rangle | \ge |c_j|-M\sum_{k\neq j}^\infty |c_k|\ge
a/s -M(1-a/s)>a/s - M.
$$
For all $g$ distinct from $g_1,\dots,g_s$ we have
$$
|\langle f,g\rangle | \le M+1/s.
$$
Our assumption $s\le 1/(2M)$ implies that $M+1/s \le t(a/s -M)$.
Thus, we do not pick any of $g\in\D$ distinct from $g_1,\dots,g_s$
until we have chosen all $g_1,\dots,g_\nu$.

Denote
$$
f':= f-\sum_{j=1}^\nu c_jg_j = \sum_{j=\nu+1}^\infty c_jg_j.
$$
It is clear from the above argument that
\begin{equation}\label{f1}
f_1=f-P_{H_1}(f) = f'-P_{H_1}(f')=f'-G_1^s(f').
\end{equation}
Define a new dictionary
$$
\D^s:=\left\{  \frac{\sum_{j\in \Lambda} c_j g_j}{\|\sum_{j\in
\Lambda} c_j g_j\|}: \quad |\Lambda |=s,  \quad g_j\in \D, c_j\in
\real \right\}.
$$
Let $J_l=[(l-1)s+\nu+1,ls+\nu]$. We write
\begin{equation}\label{4.4}
f'=\sum^{\infty}_{l=1}\sum_{j \in J_l}c_j g_j
=\sum^{\infty}_{l=1}\|\psi_l\|\frac{\psi_l}{\|\psi_l\|},
\end{equation}
where $\psi_l=\sum_{j\in J_l}c_j g_j$. Apparently
$\frac{\psi_l}{\|\psi_l\|} \in \D^s$ for $l\geq 1$. Equation
(\ref{4.4}) implies that
$$
f'\in \A_1(\D^s,\sum^{\infty}_{l=1}\|\psi_l\|).
$$
By Lemma 2.1 from
\cite{DET} we bound
\begin{equation}\label{4.5}
(1-Ms)\sum_{j\in J_l} c_j^2 \le \|\psi_l \|^2 \le (1+Ms)\sum_{j
\in J_l} c_j^2.
\end{equation}
Then we obtain
\begin{equation}\label{4.6}
\sum^{\infty}_{l=1}\|\psi_l\|\leq (1+Ms)^{1/2}\sum_{l=1}^\infty
(\sum_{j\in J_l}c^2_j)^{1/2}.
\end{equation}
Since the sequence $\{c_j\}$ has the property
\begin{equation}
|c_{\nu+1}|\ge |c_{\nu+2}|\ge \dots , \quad \sum_{j=\nu+1}^\infty
|c_j|\le 1,\quad |c_{\nu+1}|\le a/s
\end{equation}
we may apply the simple inequality,
$$
(\sum_{j\in J_l} c_j^2)^{1/2}\le s^{1/2}|c_{(l-1)s+\nu+1}|,
$$
so that we bound the sum in the right side of (\ref{4.6})
\begin{eqnarray}\label{4.8}
\sum_{l=1}^\infty(\sum_{j\in J_l} c_j^2)^{1/2} &\le&
s^{1/2}\sum_{l=1}^\infty|c_{(l-1)s+\nu+1}|\nonumber\\
 &\le& s^{1/2}(a/s + \sum_{l=2}^\infty s^{-1}\sum_{j\in J_{l-1}}|c_j|) \le (a+1)s^{-1/2}.
\end{eqnarray}
Using the above inequality in (\ref{4.6}), we obtain that
\begin{equation}\label{4.9}
f'\in \A_1(\D^s,(3/2)^{1/2}(a+1)s^{-\frac{1}{2}}).
\end{equation}

Assume $u:=\sum a_i h_i \in \A_1(\D^s,B)$, where $B$ is an
absolute constant. Then for any $\psi\in \D^s$ we have
\begin{equation*}
v:=u-\langle u,\psi \rangle \psi \in \A_1(\D,2B),
\end{equation*}
since $|\langle u,\psi \rangle|=|\langle \sum a_i g_i, \psi\rangle
|\leq \|\psi\|\sum|a_i|\leq B$. Along with (\ref{f1}) and
(\ref{4.9}) the above argument shows that
\begin{equation}\label{4.10}
f_1\in \A_1(\D^s,2(3/2)^{1/2}(a+1)s^{-1/2}).
\end{equation}

Consider the following quantity
\begin{equation*}
q_s := q_s(f_{m-1}):= \sup_{h_i\in\D  \atop
i\in[1,s]}\|P_{H(s)}(f_{m-1})\|,
\end{equation*}
where $H(s):=\spn(h_1,\dots,h_s)$. It is clear that
$$ q_s=\sup_{H(s)}\max_{\psi\in H(s), \|\psi\|\le 1}|\langle
f_{m-1},\psi\rangle |= \sup_{g^s\in \D^s}|\langle f_{m-1},g^s \rangle|  .
$$
Let $\psi = \sum_{i=1}^s a_ih_i$. Again by Lemma 2.1 from
\cite{DET} we bound
\begin{equation}\label{4.11}
(1-Ms)\sum_{i=1}^s a_i^2 \le \|\psi\|^2 \le (1+Ms)\sum_{i=1}^s
a_i^2.
\end{equation}
Therefore,
\begin{equation}\label{4.12}
(1+Ms)^{-1} \sum_{i=1}^s \langle f_{m-1},h_i\rangle ^2 \le
\|P_{H(s)}(f_{m-1})\|^2\le (1-Ms)^{-1} \sum_{i=1}^s \langle
f_{m-1},h_i\rangle ^2.
\end{equation}
Let  $p_m:= P_{H_m}(f_{m-1})$. In order to relate $q_s^2$ to $\|p_m\|^2$ we begin with the fact that
$$
q_s^2 \le
\sup_{h_i\in\D \atop i\in[1,s]}(1-Ms)^{-1}\sum_{i=1}^s\langle f_{m-1},h_i\rangle ^2.
$$

Consider an arbitrary set $\{h_i\}_{i=1}^s$ of distinct elements
of the dictionary $\D$. Let $V$ be a set of all indices
$i\in[1,s]$ such that $h_i=\varphi_{k(i)}$, $k(i)\in I_m$. Denote
$V':=\{k(i), i\in V\}$. Then
\begin{equation}\label{28.11}
\sum_{i=1}^s\langle f_{m-1},h_i\rangle ^2 = \sum_{i\in V}\langle
f_{m-1},h_i\rangle ^2 + \sum_{i\in [1,s]\setminus V}\langle
f_{m-1},h_i\rangle ^2.
\end{equation}
From the definition of $\{\varphi_k\}_{k\in I_m}$ we get
\begin{equation}\label{28.12}
\max_{i\in [1,s]\setminus V}|\langle f_{m-1},h_i\rangle | \le
t^{-1}\min_{k\in I_m\setminus V'}|\langle f_{m-1},\varphi_k\rangle
|.
\end{equation}
Using (\ref{28.12}) we continue (\ref{28.11})
$$
\le \sum_{k\in V'}\langle f_{m-1},\varphi_k\rangle ^2 + t^{-2}\sum_{k\in
I_m\setminus V'}\langle f_{m-1},\varphi_k\rangle ^2 \le t^{-2}\sum_{k\in
I_m}\langle f_{m-1},\varphi_k\rangle ^2.
$$
Therefore,
$$
q_s^2 \le (1-Ms)^{-1}t^{-2}\sum_{k\in I_m}\langle f_{m-1},\varphi_k\rangle ^2\le
\frac{1+Ms}{t^2(1-Ms)} \|p_m\|^2.
$$
This results in the following inequality
\begin{equation}\label{28.13}
\|p_m\|^2 \ge \frac{t^2(1-Ms)}{1+Ms} q_s^2.
\end{equation}

Thus we can interpret WSGA($s,t$) as WGA($rt$) with respect to the
dictionary $\D^s$, where
$r=\Big(\frac{1-Ms}{1+Ms}\Big)^{\frac{1}{2}}$.  Using (\ref{4.10}), we get
$$
\|f^{s,t}_m\| = \|(f_1)^{s,t}_{m-1}\| \le 6^{1/2}(a+1)s^{-1/2}\gamma(m-1,rt).
$$
This completes the proof of Theorem \ref{wsga}.

\noindent{\bf Proof of Theorem \ref{OGAT}.} For the $\{\varphi_i\}$ from the definition of the WOSGAT, denote
$$
F_m := \spn(\varphi_i, i\in I_m).
$$
It is easy to see that $H_m=H_{m-1}\oplus F_m$. Therefore,
\begin{eqnarray}
f_m = f-P_{H_m}(f) &=& f_{m-1}+G_{m-1}(f) -
P_{H_m}(f_{m-1}+G_{m-1}(f))
\nonumber\\
&=&
f_{m-1}-P_{H_m}(f_{m-1}).\nonumber
\end{eqnarray}
Then the inclusion $F_m\subset H_m$ implies
\begin{equation}\label{2.1}
\|f_m\| \le \|f_{m-1}-P_{F_m}(f_{m-1})\|.
\end{equation}
Using the notation $p_m:= P_{F_m}(f_{m-1})$, we continue
$$
\|f_{m-1}\|^2 =\|f_{m-1}-p_m\|^2 + \|p_m\|^2
$$
and by (\ref{2.1})
\begin{equation}\label{2.2}
 \|f_m\|^2  \le \|f_{m-1}\|^2 -\|p_m\|^2 .
\end{equation}

We now prove a lower bound for $\|p_m\|$.
 By our assumption that the dictionary is $(N,\beta)$-Bessel we get
$$
\|P_{F_m}(f_{m-1})\|^2 \ge \beta\sum_{i\in I_m}|\langle f_{m-1},\varphi_i\rangle|^2 .
$$
 Then, by the thresholding condition of the greedy step (1),   we obtain
 \begin{equation}
\|P_{F_m}(f_{m-1})\|^2  \ge \beta t^2 s_m \|f_{m-1}\|^4.
\end{equation}
Substituting this bound in (\ref{2.2}), we get
\begin{equation}
\|f_m\|^2 \le \|f_{m-1}\|^2\Big(1-\beta t^2 s_m \|f_{m-1}\|^2\Big).
\end{equation}
We now apply the  following  lemma from \cite{T1}.
\begin{lem}\label{l1}
Let $\{a_m\}^\infty_{m=0}$ be a sequence of nonnegative numbers satisfying the inequalities
$$
a_0\le A, \quad a_m\le a_{m-1}(1-\lambda^2_m a_{m-1}/A), \quad m=1,2,\ldots
$$
Then we have for each $m$
$$
a_m\le A(1+\sum_{k=1}^m \lambda^2_k)^{-1}.
$$
\end{lem}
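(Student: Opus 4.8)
The plan is to pass to reciprocals and argue by induction on $m$, which converts the quadratic recursive inequality into an additive one. First I would dispose of the trivial case $A=0$ (then $a_0\le A=0$ forces $a_0=0$, and the recursion propagates zeros), so assume $A>0$ and normalize by setting $b_m:=a_m/A$. The hypotheses become $b_0\le 1$ and $b_m\le b_{m-1}(1-\lambda_m^2 b_{m-1})$, while the target inequality $a_m\le A(1+\sum_{k=1}^m\lambda_k^2)^{-1}$ is equivalent to the lower bound $b_m^{-1}\ge 1+\sum_{k=1}^m\lambda_k^2$ on the reciprocals.

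The engine of the argument is the elementary inequality $\frac{1}{1-z}\ge 1+z$, valid for $z<1$. Assuming for the moment that $b_{m-1}>0$ and that the factor $1-\lambda_m^2 b_{m-1}$ is strictly positive, I would write
$$
\frac{1}{b_m}\ge \frac{1}{b_{m-1}(1-\lambda_m^2 b_{m-1})}\ge \frac{1}{b_{m-1}}\left(1+\lambda_m^2 b_{m-1}\right)=\frac{1}{b_{m-1}}+\lambda_m^2 .
$$
Iterating this one-step estimate from $m$ down to $1$ telescopes to $b_m^{-1}\ge b_0^{-1}+\sum_{k=1}^m\lambda_k^2$, and since $b_0\le 1$ gives $b_0^{-1}\ge 1$, we arrive at $b_m^{-1}\ge 1+\sum_{k=1}^m\lambda_k^2$, which unwinds to the stated bound on $a_m$.

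The step that needs care, and which I expect to be the only genuine obstacle, is the degenerate behaviour of the reciprocals. Two things can go wrong: some $a_{m-1}$ may equal $0$, so that $b_{m-1}^{-1}$ is undefined, and the multiplicative factor $1-\lambda_m^2 a_{m-1}/A$ could a priori fail to lie in $[0,1)$. For the first issue I would observe that if $a_k=0$ for some index $k$, then the recursion together with nonnegativity forces $a_m=0$ for all $m\ge k$ (since $a_m\le a_{m-1}(\cdots)$ and $a_m\ge 0$), so the desired inequality holds trivially from that point on; hence it suffices to run the reciprocal argument over the initial stretch on which all $a_m$ are strictly positive. For the second issue, note that $a_m\ge 0$ combined with $a_m\le a_{m-1}(1-\lambda_m^2 a_{m-1}/A)$ forces $1-\lambda_m^2 a_{m-1}/A\ge a_m/a_{m-1}\ge 0$ whenever $a_{m-1}>0$; the boundary case where this factor equals $0$ (so that $a_m=0$) is absorbed into the first observation, and on the strictly positive stretch the factor genuinely lies in $(0,1)$, so the elementary inequality $\frac{1}{1-z}\ge 1+z$ applies legitimately. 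With these two points dispatched, the induction above goes through verbatim and yields the claim.
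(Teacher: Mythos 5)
Your proof is correct. There is, however, nothing in the paper to compare it against: the authors do not prove this lemma at all, but quote it from \cite{T1} (Temlyakov, \emph{Weak greedy algorithms}). Your argument --- normalize by $A$, pass to reciprocals, apply $\frac{1}{1-z}\ge 1+z$ for $z<1$ to get the one-step bound $b_m^{-1}\ge b_{m-1}^{-1}+\lambda_m^2$, and telescope using $b_0^{-1}\ge 1$ --- is precisely the standard proof found in that reference, so your proposal recovers the omitted argument rather than diverging from it. Your treatment of the degenerate cases is also sound: zeros propagate forward (since $0\le a_m\le a_{m-1}\cdot(\text{factor})$) and make the bound trivial from that index on, and on the strictly positive stretch the factor $1-\lambda_m^2 a_{m-1}/A$ is forced into $(0,1]$, where the elementary inequality legitimately applies. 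Two cosmetic quibbles only: when $A=0$ the hypothesis itself contains a division by zero, so that case is best dismissed as ill-posed or vacuous rather than argued; and the factor can equal $1$ exactly (when $\lambda_m=0$), so your ``lies in $(0,1)$'' should read ``lies in $(0,1]$'' --- neither point affects the validity of the proof.
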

It gives us
$$
\|f_m\|\le  (1+\beta t^2\sum_{j=1}^m s_j)^{-1/2}.
$$
\hfill $\Box$

{\bf Acknowledgements.} We are grateful to Prof. Ming-Jun Lai for helpful discussions.
This research was supported by the National Science Foundation Grant DMS-0906260.

\end{document}